\documentclass[11pt]{amsart}

\usepackage[letterpaper,margin=1in]{geometry}
\usepackage[T1]{fontenc}
\usepackage{lmodern}
\usepackage{amsmath,amssymb,amsthm}
\usepackage{microtype}
\usepackage{enumitem}
\usepackage[hidelinks]{hyperref}

\newcommand{\K}{\mathbb{K}}
\newcommand{\N}{\mathbb{N}}
\newcommand{\x}{\underline{x}}
\newcommand{\X}{\underline{X}}
\DeclareMathOperator{\rank}{rank}
\DeclareMathOperator{\End}{End}

\newtheorem{theorem}{Theorem}[section]
\newtheorem{conjecture}[theorem]{Conjecture}
\newtheorem{proposition}[theorem]{Proposition}
\setlist[enumerate]{label=\textup{(\roman*)},leftmargin=*,itemsep=3pt}

    	\author[]{Sizhuo Yan }
	\address{ State Key Laboratory of Mathematical Sciences, Academy of Mathematics and Systems Science, Chinese Academy of Sciences, Beijing 100190, China }
	\email{yansizhuo@amss.ac.cn}

    	\author[]{Jianting Yang}
	\address{ School of Mathematical Sciences, University of Electronic Science and Technology of China, Chengdu 611731,  China} 
	\email{jtyangsdp@gmail.com}

\title{On a Rank Nullstellensatz Conjecture\\
for Noncommutative Polynomials}

\date{}

\hypersetup{
  pdftitle={On a Rank Nullstellensatz Conjecture for Noncommutative Polynomials},
  pdfsubject={A counterexample to Conjecture 3 of Volcic and its homogeneous case}
}

\begin{document}
\maketitle

\begin{abstract}
Jurij Vol\v{c}i\v{c} conjectured that a noncommutative polynomial $g$
belongs to the two-sided ideal generated by $f_1,\ldots,f_l$ if and only
if, for matrices of every size, the rank of the evaluation of $g$ is bounded
by a constant times the  maximum rank of the evaluations of
$f_1,\ldots,f_l$. In this paper, we show
this equivalence when the  generators $f_1,\ldots,f_l$ are homogeneous. We then give
an explicit counterexample in the non-homogeneous case.
\end{abstract}

\section{Introduction}

In~\cite{Vol25},
Vol\v{c}i\v{c} surveys several results on noncommutative Nullstellens{\"a}tze and proposes a conjecture based on rank inequalities. In this paper, we give an affirmative answer to this conjecture when  the generators are homogeneous, and give a counterexample in the non-homogeneous case.

Throughout this paper, $\N$ denotes the set of positive integers and $\K$ denotes an algebraically closed field of characteristic 0. We use
$\x=(x_1,x_2,\ldots,x_d)$ to represent a tuple of noncommutative
variables, where $d\in\N$. Let $\K\langle\x\rangle$ denote the free
$\K$-algebra generated by variables $\x=(x_1,x_2,\ldots,x_d)$, and let $\langle\x\rangle$
denote the set of words, including the empty word $1$. The degree
$\deg(\omega)$ of a word $\omega$ is its length, and the degree
$\deg(f)$ of a nonzero polynomial $f$ is the maximum degree of a word
with nonzero coefficient. A polynomial is homogeneous if all its words
with nonzero coefficients have the same degree.

Let $\X=(X_1,X_2,\ldots,X_d)\in\mathrm{M}_n(\K)^d$ denote a tuple
of $n\times n$ matrices. The evaluation of
$f\in\K\langle\x\rangle$ at $\X$ is denoted by $f(\X)$, with the
constant $1$ evaluated as $I_n$. For $l\in\N$, we write
$(f_1,\ldots,f_l)$ for the two-sided ideal of $\K\langle\x\rangle$
generated by $f_1,\ldots,f_l$.

With the above notation, Vol\v{c}i\v{c}'s conjecture can be stated as follows.
 
\begin{conjecture}
\label{conj:rank}
\cite[Conjecture~3]{Vol25}
For $f_1,\ldots,f_l,g\in\K\langle\x\rangle$, the following
conditions are equivalent:
\begin{enumerate}
  \item there exists $C\in\N$ such that for all $n\in\N$ and
  $\X\in\mathrm{M}_n(\K)^d$,
  \begin{equation*}
    \rank g(\X)\leq C\max\{\rank f_1(\X),\ldots,\rank f_l(\X)\};
  \end{equation*}
  \item the polynomial $g$ belongs to the two-sided ideal generated by
  $f_1,\ldots,f_l$.
\end{enumerate}
\end{conjecture}

In Section~\ref{sec:homogeneous}, we prove that
Conjecture~\ref{conj:rank} holds when $f_1,\ldots,f_l$ are homogeneous.
The proof follows directly from the homogeneous Nullstellens{\"a}tze for true
zeros. In Section~\ref{sec:counterexample}, we show that the conjecture
fails in the non-homogeneous case, even for $l=1$.
A related homogeneous Nullstellensatz for joint invariant subspaces is
studied in~\cite{YYZ26}.

\section{Homogeneous Nullstellens{\"a}tze for Rank Domination}
\label{sec:homogeneous}

The true zero set of a polynomial $g\in\K\langle\x\rangle$ and the
common true zero set of $f_1,\ldots,f_l\in\K\langle\x\rangle$ are
defined by
\[
  Z(g):=\bigcup_{n\in\N}
    \{\X\in\mathrm{M}_n(\K)^d\mid g(\X)=0\},
\]
\[
  Z(f_1,\ldots,f_l):=\bigcup_{n\in\N}
    \{\X\in\mathrm{M}_n(\K)^d\mid
      f_j(\X)=0\text{ for all }j=1,\ldots,l\}.
\]

We recall the following homogeneous Nullstellensatz of Salomon, Shalit and Shamovich~\cite[Theorem~7.3]{SSS18}; see also~\cite[Theorem~2]{Vol25}.

\begin{theorem}[Salomon--Shalit--Shamovich]
\label{thm:sss}
Let $f_1,\ldots,f_l,g\in\K\langle\x\rangle$, where
$f_1,\ldots,f_l$ are homogeneous. Then the following are equivalent:
\begin{enumerate}
  \item $Z(f_1,\ldots,f_l)\subseteq Z(g)$;
  \item  $g$ belongs to the two-sided ideal generated by
  $f_1,\ldots,f_l$.
\end{enumerate}
\end{theorem}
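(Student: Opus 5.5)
The implication (ii)$\Rightarrow$(i) is immediate. If $g=\sum_k a_k f_{j_k} b_k$ with $a_k,b_k\in\K\langle\x\rangle$, then evaluating at any $\X\in Z(f_1,\ldots,f_l)$ gives $g(\X)=\sum_k a_k(\X)f_{j_k}(\X)b_k(\X)=0$. So the plan concentrates on (i)$\Rightarrow$(ii). The idea is to produce one explicit finite-dimensional tuple of matrices lying in $Z(f_1,\ldots,f_l)$ that "sees" the ideal up to degree $\deg g$. This is a truncated left regular representation of the quotient algebra.

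Let $I=(f_1,\ldots,f_l)$ and fix $N\geq \deg g$. Let $W_{>N}$ be the span of all words of degree $>N$; it is a two-sided ideal. Set
\[
J=I+W_{>N}, \qquad A=\K\langle\x\rangle/J .
\]
Then $A$ is a quotient of $\K\langle\x\rangle/W_{>N}$, so it is finite-dimensional, say $\dim A=n$. If $n=0$, then $1\in J$. Since $I$ is spanned by elements of the form $w f_j w'$ with $w,w'$ words, and each of these is homogeneous, $I$ is a graded ideal. Hence $J$ is graded, and comparing degree-$0$ components gives $1\in I$, so $g\in I$. Otherwise, let $X_i\in\End(A)\cong \mathrm{M}_n(\K)$ be left multiplication by $x_i$. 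For every $p\in\K\langle\x\rangle$, $p(\X)$ is left multiplication by the class of $p$. Since $f_j\in J$ and $J$ is a two-sided ideal, $f_j(\X)=0$, so $\X\in Z(f_1,\ldots,f_l)$. By (i), $g(\X)=0$. Applying this to the class of $1$ gives $g\in J=I+W_{>N}$.

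It remains to strip off $W_{>N}$, and here homogeneity is used. Write $g=h+r$ with $h\in I$ and $r\in W_{>N}$. Because $I$ is graded, every homogeneous component $h_m$ of $h$ lies in $I$. Comparing homogeneous components of degree $m\leq N$, we get $g_m=h_m\in I$ for all $m\leq N$, since $r$ has no components in degrees $\leq N$. As $N\geq\deg g$, $g=\sum_{m\leq N}g_m\in I$.

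The main point to verify carefully is that $I$ is graded, meaning that the homogeneous components of elements of $I$ again lie in $I$. This is where the homogeneity of $f_1,\ldots,f_l$ is essential. Without it, $I\cap(\text{degree}\le N)$ need not be recovered from $J$, and the truncation argument breaks down. That is precisely the gap exploited in the non-homogeneous setting.
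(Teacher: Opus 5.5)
Your proof is correct. The paper, however, gives no proof of this statement: Theorem~\ref{thm:sss} is imported as a black box from \cite[Theorem~7.3]{SSS18} (see also \cite[Theorem~2]{Vol25}) and then used to deduce Theorem~\ref{thm:homogeneous}. Your argument is a self-contained replacement for that citation, and it runs in two steps.
\begin{enumerate}
  \item You evaluate at the left regular representation of the finite-dimensional algebra $\K\langle\x\rangle/(I+W_{>N})$, with $N\ge\deg g$. This tuple lies in $Z(f_1,\ldots,f_l)$ and detects membership in $I+W_{>N}$, by applying $g(\X)$ to the class of $1$.
  \item You discard $W_{>N}$ using that $I$ is graded, since it is spanned by the homogeneous elements $wf_jw'$, comparing homogeneous components of degree at most $N$.
\end{enumerate}
You also correctly treat the degenerate case $\dim A=0$ separately. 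This matters because $Z$ only involves matrix sizes $n\ge 1$.

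Compared with the citation, your route gains more than brevity costs. It uses neither algebraic closedness nor characteristic $0$. It also shows that condition (i) only needs to be tested at a single jointly nilpotent tuple, on which all words of length $>N$ vanish, of explicit size at most $\sum_{m=0}^{\deg g}d^m$. As a result, the implication (i)$\Rightarrow$(ii) in Theorem~\ref{thm:homogeneous} becomes elementary and effective. The paper's choice simply keeps the note short by deferring to the literature.
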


The homogeneous case of Conjecture~\ref{conj:rank} follows from Theorem~\ref{thm:sss}. The implication $(ii)\Rightarrow(i)$ was already observed by Vol\v{c}i\v{c}~\cite{Vol25}. For completeness, we provide the details below.

\begin{theorem}
\label{thm:homogeneous}
Let $f_1,\ldots,f_l,g\in\K\langle\x\rangle$, where
$f_1,\ldots,f_l$ are homogeneous. Then the following are equivalent.
\begin{enumerate}
  \item there exists $C\in\N$ such that for all $n\in\N$ and
  $\X\in\mathrm{M}_n(\K)^d$,
  \[
    \rank g(\X)\leq C\max\{\rank f_1(\X),\ldots,\rank f_l(\X)\};
  \]
  \item the polynomial $g$ belongs to the two-sided ideal generated by
  $f_1,\ldots,f_l$.
\end{enumerate}
\end{theorem}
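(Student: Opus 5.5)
We prove the two implications separately. For $(i)\Rightarrow(ii)$ we reduce to Theorem~\ref{thm:sss}. Suppose the rank inequality holds with some constant $C$, and let $\X\in Z(f_1,\ldots,f_l)$. Then $\rank f_j(\X)=0$ for every $j$, so the inequality gives $\rank g(\X)\le C\cdot 0=0$. Hence $g(\X)=0$, and therefore $Z(f_1,\ldots,f_l)\subseteq Z(g)$. Since the $f_j$ are homogeneous, Theorem~\ref{thm:sss} yields $g\in(f_1,\ldots,f_l)$. This direction is immediate; all the difficulty sits inside the cited Nullstellensatz.

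For $(ii)\Rightarrow(i)$, which needs no homogeneity, we write
\[
  g=\sum_{k=1}^{m} a_k f_{j_k} b_k
\]
with $a_k,b_k\in\K\langle\x\rangle$, $j_k\in\{1,\ldots,l\}$ and $m\in\N$. This is possible because every element of the two-sided ideal is a finite sum of such terms, and scalars can be absorbed into the $a_k$.

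Now evaluate at any $\X\in\mathrm{M}_n(\K)^d$. Evaluation is an algebra homomorphism, so
\[
  g(\X)=\sum_{k=1}^m a_k(\X)f_{j_k}(\X)b_k(\X).
\]
Rank is subadditive, and $\rank(AB)\le\min(\rank A,\rank B)$. Together these give
\[
  \rank g(\X)\le\sum_{k=1}^m \rank f_{j_k}(\X)\le m\max_j \rank f_j(\X).
\]
So $C=m$ works, and it is independent of $n$ and $\X$ because $m$ depends only on the chosen representation of $g$.

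The only step carrying real content is the appeal to Theorem~\ref{thm:sss}. The one point to check there is that the true zero set in that theorem ranges over all matrix sizes $n\in\N$, which matches exactly the range quantified in condition (i).
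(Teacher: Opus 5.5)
Your proof is correct and follows essentially the same argument as the paper. For (ii)$\Rightarrow$(i) you use subadditivity of rank and $\rank(AB)\le\min(\rank A,\rank B)$ to get $C$ equal to the number of terms, and for (i)$\Rightarrow$(ii) you deduce $Z(f_1,\ldots,f_l)\subseteq Z(g)$ and apply Theorem~\ref{thm:sss}.
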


\begin{proof}
\textup{(ii)}$\Rightarrow$\textup{(i)}.
Suppose $g\in(f_1,\ldots,f_l)$. There exist $N\in\N$,
$p_r,q_r\in\K\langle\x\rangle$ and $j_r\in\{1,\ldots,l\}$ such that
\[
  g=\sum_{r=1}^{N}p_r f_{j_r}q_r.
\]
For every $n\in\N$ and $\X\in\mathrm{M}_n(\K)^d$, the elementary
rank inequalities give
\[
\begin{aligned}
  \rank g(\X)
  &\leq\sum_{r=1}^{N}\rank\bigl(p_r(\X)f_{j_r}(\X)q_r(\X)\bigr)\\
  &\leq N\max\{\rank f_1(\X),\ldots,\rank f_l(\X)\}.
\end{aligned}
\]
Thus \textup{(i)} holds with $C=N$. This implication does not require
homogeneity.

\textup{(i)}$\Rightarrow$\textup{(ii)}.
Let $\X\in Z(f_1,\ldots,f_l)$. Since $f_j(\X)=0$ for all
$j=1,\ldots,l$, condition~\textup{(i)} implies $\rank g(\X)=0$.
Hence $g(\X)=0$, and therefore
$Z(f_1,\ldots,f_l)\subseteq Z(g)$. By Theorem~\ref{thm:sss}, the
polynomial $g$ belongs to $(f_1,\ldots,f_l)$.
\end{proof}

Only the generators $f_1,\ldots,f_l$ are required to be homogeneous, and 
the degrees of these $f_k$ may differ, $g$ need not be homogeneous.

\section{Non-homogeneous Case}
\label{sec:counterexample}

We now show that Theorem~\ref{thm:homogeneous} cannot be extended to
non-homogeneous generators. The following counterexample gives a negative
answer to Conjecture~\ref{conj:rank} in the general case.

\begin{proposition}
\label{prop:counterexample}
Let $f_1=xy-1$ and $g=yx-1$ in $\K\langle x,y\rangle$. Then
\[
  \rank g(X,Y)=\rank f_1(X,Y)
  \qquad\text{for all } n\in\N \text{ and } X,Y\in\mathrm{M}_n(\K),
\]
but $g\notin(f_1)$. Consequently, Conjecture~\ref{conj:rank} is not true in general.
\end{proposition}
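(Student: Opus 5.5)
The statement has two parts: the rank equality, which is pure linear algebra, and the non-membership $g\notin(f_1)$, which needs a representation of $\K\langle x,y\rangle/(xy-1)$ in which $yx-1$ is nonzero.

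\emph{Rank equality.} For $X,Y\in\mathrm{M}_n(\K)$ I will use the classical fact that $\rank(I_n-XY)=\rank(I_n-YX)$. It follows from Sylvester's identity applied to the block matrix
\[
  M=\begin{pmatrix} I_n & X\\ Y & I_n\end{pmatrix}.
\]
Block elimination with the upper-left pivot $I_n$ makes $M$ equivalent to $\operatorname{diag}(I_n, I_n-YX)$. Eliminating with the lower-right pivot instead makes $M$ equivalent to $\operatorname{diag}(I_n-XY, I_n)$. Comparing ranks gives $n+\rank(I_n-YX)=n+\rank(I_n-XY)$, so $\rank g(X,Y)=\rank f_1(X,Y)$. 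In particular condition~(i) of Conjecture~\ref{conj:rank} holds with $C=1$.

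\emph{Non-membership.} Since $\K\langle x,y\rangle/(xy-1)$ is the algebra generated by a one-sided inverse pair, it suffices to find any $\K$-algebra $A$ and elements $a,b\in A$ with $ab=1$ but $ba\neq 1$. The homomorphism $x\mapsto a$, $y\mapsto b$ kills $(f_1)$ but does not kill $g$.

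Take $V=\K[t]$ and let $A=\End_\K(V)$. Let $b$ be multiplication by $t$, and let $a$ be the backward shift $t^k\mapsto t^{k-1}$ for $k\ge1$, with $1\mapsto 0$. Then $ab=\mathrm{id}$, while $ba$ kills the constant $1$, so $ba\neq\mathrm{id}$. The assignment $x\mapsto a$, $y\mapsto b$ defines a homomorphism $\K\langle x,y\rangle\to A$. It sends every element $p(xy-1)q$ to $0$, hence sends all of $(f_1)$ to $0$, but it sends $g$ to $ba-1\neq0$. Therefore $g\notin(f_1)$.

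Combining the two parts, condition~(i) holds while condition~(ii) fails, so Conjecture~\ref{conj:rank} is false.

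The main subtlety is that non-membership cannot be detected by finite matrices. Over $\mathrm{M}_n(\K)$, $XY=I$ forces $YX=I$, which is exactly why the rank condition is satisfied. This is why the witness must be infinite-dimensional, as the shift operators are.
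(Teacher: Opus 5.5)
Your proof is correct and follows the paper's route. For non-membership, your pair on $\K[t]$ (multiplication by $t$ and the backward shift) is exactly the paper's shift pair $(R,L)$, used the same way to annihilate $(f_1)$ but not $g$. The only difference is the rank identity: you obtain it by Schur-complement elimination of $\begin{pmatrix} I_n & X\\ Y & I_n\end{pmatrix}$ from either corner, whereas the paper shows that $Y$ maps $\ker(I-XY)$ injectively into $\ker(I-YX)$ and $X$ maps the other way. Both arguments are standard and equally valid.
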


\begin{proof}
Fix $X,Y\in\mathrm{M}_n(\K)$. Suppose 
\[ v\in\ker(I-XY). \] 
Then we have $ XYv=v,$ hence 
\[ (I-YX)Yv = Yv-YXYv = Y(v-XYv) = 0, \] 
therefore
\[ Yv\in\ker(I-YX). \] 
Moreover, if $Yv=0$, then 
\[ v=XYv=X(Yv)=0. \]
 Thus, $Y$ defines an injective map from $\ker(I-XY)$ to $\ker(I-YX)$, and we obtain that 
\[ \dim\ker(I-XY) \leq \dim\ker(I-YX). \]
Similarly $ \dim\ker(I-XY) \geq \dim\ker(I-YX)$, this shows that $ \rank g(X,Y)=\rank f_1(X,Y)$.

To prove $g\notin(f_1)$, let $V$ be an infinite-dimensional $\K$-vector space with basis $\{e_k\}_{k\in\N}$.

Suppose  to the contrary that $g\in(f_1)$. Then
\[
  g=\sum_{k=1}^{N}a_k f_1 b_k
\]
for some $N\in\N$ and $a_k,b_k\in\K\langle x,y\rangle$,
 and consider the left and right shift operators
$L,R\in\End_{\K}(V)$ given by
\[
  Le_1=0,\qquad Le_{k+1}=e_k,\qquad Re_k=e_{k+1}
  \quad(k\in\N).
\]
Since $LR=\operatorname{Id}_V$, evaluating the above identity at $(L,R)$ gives
\[
  RL-\operatorname{Id}_V =g(L,R)
  =\sum_{k=1}^{N}a_k(L,R)(LR-\operatorname{Id}_V)b_k(L,R)=0.
\]
This contradicts $(RL-\operatorname{Id}_V)e_1=-e_1\neq 0$.
Therefore $g\notin(f_1)$.
Thus condition~\textup{(i)} of Conjecture~\ref{conj:rank} holds with
$C=1$, while condition~\textup{(ii)} fails.

\end{proof}

  \section*{Acknowledgments}

The authors thank Junqi Liu for his assistance with the use of AI tools during the preparation of this manuscript.

\section*{Declaration of generative AI}

The main results presented in this paper are obtained through an interaction between the authors and an artificial intelligence agent system, MechMath Agent Team (MMAT) \cite{cao2026mechmathagentteamllm}.  GPT-5.6 Sol was used during this work to assist in exploring proof strategies and revising the exposition. The authors assume full responsibility for all proofs and mathematical details and for the mathematical correctness of the final results.
\bibliographystyle{plain}
\bibliography{rank_nullstellensatz_references}
\end{document}